\tikzstyle{vertex}=[circle, draw, fill=black, inner sep=0pt, minimum size=6pt]
\tikzstyle{smlv}=[circle, draw, fill=black, inner sep=0pt, minimum size=2pt]
\tikzstyle{grvert}=[circle, draw, gray, fill=gray,  inner sep=0pt, minimum size=6pt]
\newcommand{\smlv}{\node[smlv]}
\newtheorem{theorem}{Theorem}[section]
\def\M{\mathcal{M}}
\title{Corrigendum to ``Topology of matching complexes of complete graphs via discrete Morse theory'' [Discrete Math.\ Theor.\ Comput.\ Sci.\ 26:3\#13 (2024)]}
\author{Anupam Mondal\thanks{\texttt{anupam.mondal@tcgcrest.org}}}
\author{Sajal Mukherjee\thanks{\texttt{sajal.mukherjee@tcgcrest.org}}}
\author{Kuldeep Saha\thanks{\texttt{kuldeep.saha@tcgcrest.org}}}
\affil{\small Institute for Advancing Intelligence (IAI), TCG CREST, Kolkata, India}
\affil{\small Academy of Scientific \& Innovative Research (AcSIR), Ghaziabad, India}
\date{}
\begin{document}

\maketitle

\begin{abstract}
In this corrigendum, we justify that the motivation behind Conjecture~5.1 is based on a mistaken notion. Moreover, we prove a stronger theorem that disproves the conjecture, and obtain an optimal gradient vector field on the matching complex of the complete graph of order 7.

\smallskip
\noindent \textbf{Keywords.} discrete Morse theory, gradient vector field, gradient path, cancellation of a critical pair
\\
\noindent \textit{2020 MSC:} 57Q70 (primary), 05C70, 05E45
\end{abstract}

\section{On the motivation behind Conjecture~5.1}
In the article ``Topology of matching complexes of complete graphs via discrete Morse theory'', which appeared in Discrete Mathematics \& Theoretical Computer Science, vol.~26:3 (2024), we first constructed a gradient vector field $\M$ on $M_n$ (the matching complex of the complete graph of order $n$) for $n \ge 5$ that is optimal up to a certain dimension. We then extended $\M$ to a more efficient one, viz., $\M^*$, for $M_7$ in particular, in order to compute its homology groups in an efficient combinatorial manner. Once this goal was achieved, we set out to augment $\M^*$ further towards optimality. We applied the technique of \emph{cancellation of a critical pair} by \cite{forman,forman02} \cite[Theorem~2.7]{mms} on $\M^*$ simultaneously twice (by cancelling the critical pairs $\eta_1$ \& $\sigma_4$ and $\eta_2$ \& $\sigma_3$), and ended up with a new gradient vector field $\M^{**}$ \cite[proof of Theorem~1.4]{mms}. With respect to $\M^{**}$, there are 22 critical $2$-simplices, two critical $1$-simplices (viz., $\sigma_1$ and $\sigma_2$), and one critical $0$-simplex, whereas the theoretical lower-bounds for these numbers are 21, 1, 1, respectively.

In the conclusion of that article \cite[Section~5]{mms}, we claimed that $\M^{**}$ cannot be augmented further using the cancellation technique, and conjectured (Conjecture~5.1) that $\M^{**}$ is indeed an optimal gradient vector field on $M_7$. The claim is based on the following premise:
``$\ldots$ there is an $\M^*$-path from a face of a critical (with respect to $\M^*$) $2$-simplex, say $\eta$, to the critical $1$-simplex $\sigma_1$ if and only if there is an $\M^*$-path from a face of $\eta$ to $\sigma_4$.'', which is demonstrably a mistaken notion. For example, let us consider the following critical (with respect to $\M^{*}$) $2$-simplex, say $\eta_3$:
\[\begin{tikzpicture}		
	\smlv (v11) at (0,0) {};
	\smlv (v12) at (0.5,0) {};
	\smlv (v13) at (1,0) {};

	\smlv (v21) at (0,0.5) {};
	\smlv (v22) at (0.5,0.5) {};
	\smlv (v23) at (1,0.5) {};

	\smlv (v31) at (0,1) {};
	
	\path
	(v21) edge (v31)
	(v11) edge (v22)
	(v12) edge (v23)
	;
\end{tikzpicture}\]
We note that there are exactly three gradient paths, starting from a face of $\eta_3$ and ending at a critical $1$-simplex \cite[Fig.~15, Fig.~18, Fig.~20]{mms} as shown in Figure~\ref{eta3} below. 
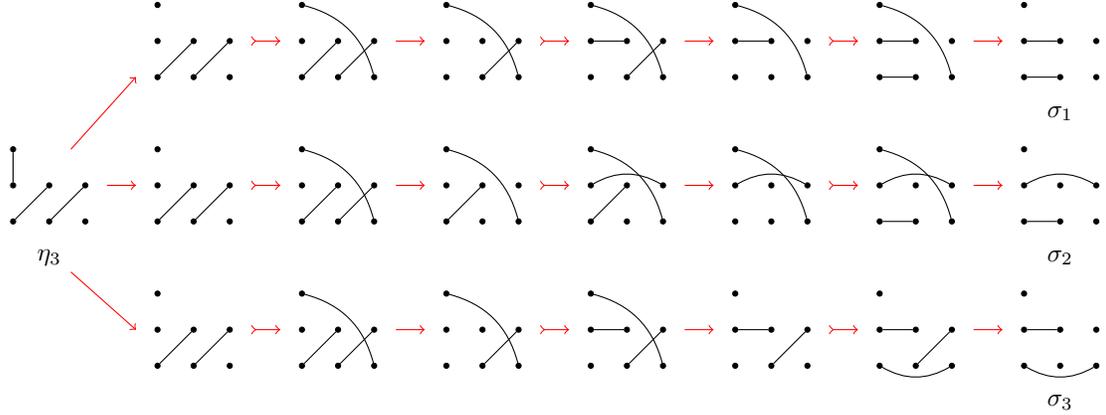
\begin{figure}[!ht]
	\centering
	\begin{tikzpicture}
		\begin{scope}[scale=0.96, transform shape]
			\begin{scope}[xshift=-2cm]
				\node at (0.5,-0.5) {$\eta_3$};
				
				\smlv (v11) at (0,0) {};
				\smlv (v12) at (0.5,0) {};
				\smlv (v13) at (1,0) {};

				\smlv (v21) at (0,0.5) {};
				\smlv (v22) at (0.5,0.5) {};
				\smlv (v23) at (1,0.5) {};

				\smlv (v31) at (0,1) {};
				
				\path
				(v21) edge (v31)
				(v11) edge (v22)
				(v12) edge (v23)
				;
			\end{scope}
			
			\draw [-to,red](-1.2,1) -- (-0.3,2);
			\begin{scope}[yshift=2cm]
				\begin{scope}
					
					\smlv (v11) at (0,0) {};
					\smlv (v12) at (0.5,0) {};
					\smlv (v13) at (1,0) {};

					\smlv (v21) at (0,0.5) {};
					\smlv (v22) at (0.5,0.5) {};
					\smlv (v23) at (1,0.5) {};

					\smlv (v31) at (0,1) {};
					
					\path
					(v11) edge (v22)
					(v12) edge (v23)
					;
				\end{scope}
				
				\draw [to reversed-to,red](1.3,0.5) -- (1.7,0.5);
				\begin{scope}[xshift=2cm]			
					\smlv (v11) at (0,0) {};
					\smlv (v12) at (0.5,0) {};
					\smlv (v13) at (1,0) {};

					\smlv (v21) at (0,0.5) {};
					\smlv (v22) at (0.5,0.5) {};
					\smlv (v23) at (1,0.5) {};

					\smlv (v31) at (0,1) {};
					
					\path
					(v11) edge (v22)
					(v12) edge (v23)
					;
					\path[bend right]
					(v13) edge (v31)
					;
				\end{scope}
				
				\draw [-to,red](3.3,0.5) -- (3.7,0.5);
				\begin{scope}[xshift=4cm]			
					\smlv (v11) at (0,0) {};
					\smlv (v12) at (0.5,0) {};
					\smlv (v13) at (1,0) {};

					\smlv (v21) at (0,0.5) {};
					\smlv (v22) at (0.5,0.5) {};
					\smlv (v23) at (1,0.5) {};

					\smlv (v31) at (0,1) {};
					
					\path
					(v12) edge (v23)
					;
					\path[bend right]
					(v13) edge (v31)
					;
				\end{scope}
				
				\draw [to reversed-to,red](5.3,0.5) -- (5.7,0.5);
				\begin{scope}[xshift=6cm]			
					\smlv (v11) at (0,0) {};
					\smlv (v12) at (0.5,0) {};
					\smlv (v13) at (1,0) {};

					\smlv (v21) at (0,0.5) {};
					\smlv (v22) at (0.5,0.5) {};
					\smlv (v23) at (1,0.5) {};

					\smlv (v31) at (0,1) {};
					
					\path
					(v12) edge (v23)
					(v21) edge (v22)
					;
					\path[bend right]
					(v13) edge (v31)
					;
				\end{scope}
				
				\draw [-to,red](7.3,0.5) -- (7.7,0.5);
				\begin{scope}[xshift=8cm]			
					\smlv (v11) at (0,0) {};
					\smlv (v12) at (0.5,0) {};
					\smlv (v13) at (1,0) {};

					\smlv (v21) at (0,0.5) {};
					\smlv (v22) at (0.5,0.5) {};
					\smlv (v23) at (1,0.5) {};

					\smlv (v31) at (0,1) {};
					
					\path
					(v21) edge (v22)
					;
					\path[bend right]
					(v13) edge (v31)
					;
				\end{scope}
				
				\draw [to reversed-to,red](9.3,0.5) -- (9.7,0.5);
				\begin{scope}[xshift=10cm]			
					\smlv (v11) at (0,0) {};
					\smlv (v12) at (0.5,0) {};
					\smlv (v13) at (1,0) {};

					\smlv (v21) at (0,0.5) {};
					\smlv (v22) at (0.5,0.5) {};
					\smlv (v23) at (1,0.5) {};

					\smlv (v31) at (0,1) {};
					
					\path
					(v11) edge (v12)
					(v21) edge (v22)
					;
					\path[bend right]
					(v13) edge (v31)
					;
				\end{scope}
				
				\draw [-to,red](11.3,0.5) -- (11.7,0.5);
				\begin{scope}[xshift=12cm]
					\node at (0.5,-0.5) {$\sigma_1$};
					
					\smlv (v11) at (0,0) {};
					\smlv (v12) at (0.5,0) {};
					\smlv (v13) at (1,0) {};

					\smlv (v21) at (0,0.5) {};
					\smlv (v22) at (0.5,0.5) {};
					\smlv (v23) at (1,0.5) {};

					\smlv (v31) at (0,1) {};
					
					\path
					(v11) edge (v12)
					(v21) edge (v22)
					;
				\end{scope}
			\end{scope}

			\draw [-to,red](-0.7,0.5) -- (-0.3,0.5);
			\begin{scope}
				\begin{scope}
					
					\smlv (v11) at (0,0) {};
					\smlv (v12) at (0.5,0) {};
					\smlv (v13) at (1,0) {};

					\smlv (v21) at (0,0.5) {};
					\smlv (v22) at (0.5,0.5) {};
					\smlv (v23) at (1,0.5) {};

					\smlv (v31) at (0,1) {};
					
					\path
					(v11) edge (v22)
					(v12) edge (v23)
					;
				\end{scope}
				
				\draw [to reversed-to,red](1.3,0.5) -- (1.7,0.5);
				\begin{scope}[xshift=2cm]			
					\smlv (v11) at (0,0) {};
					\smlv (v12) at (0.5,0) {};
					\smlv (v13) at (1,0) {};

					\smlv (v21) at (0,0.5) {};
					\smlv (v22) at (0.5,0.5) {};
					\smlv (v23) at (1,0.5) {};

					\smlv (v31) at (0,1) {};
					
					\path
					(v11) edge (v22)
					(v12) edge (v23)
					;
					\path[bend right]
					(v13) edge (v31)
					;
				\end{scope}
				
				\draw [-to,red](3.3,0.5) -- (3.7,0.5);
				\begin{scope}[xshift=4cm]			
					\smlv (v11) at (0,0) {};
					\smlv (v12) at (0.5,0) {};
					\smlv (v13) at (1,0) {};

					\smlv (v21) at (0,0.5) {};
					\smlv (v22) at (0.5,0.5) {};
					\smlv (v23) at (1,0.5) {};

					\smlv (v31) at (0,1) {};
					
					\path
					(v11) edge (v22)
					;
					\path[bend right]
					(v13) edge (v31)
					;
				\end{scope}
				
				\draw [to reversed-to,red](5.3,0.5) -- (5.7,0.5);
				\begin{scope}[xshift=6cm]			
					\smlv (v11) at (0,0) {};
					\smlv (v12) at (0.5,0) {};
					\smlv (v13) at (1,0) {};

					\smlv (v21) at (0,0.5) {};
					\smlv (v22) at (0.5,0.5) {};
					\smlv (v23) at (1,0.5) {};

					\smlv (v31) at (0,1) {};
					
					\path
					(v11) edge (v22)
					;
					\path[bend right]
					(v13) edge (v31)
					(v23) edge (v21)
					;
				\end{scope}
				
				\draw [-to,red](7.3,0.5) -- (7.7,0.5);
				\begin{scope}[xshift=8cm]			
					\smlv (v11) at (0,0) {};
					\smlv (v12) at (0.5,0) {};
					\smlv (v13) at (1,0) {};

					\smlv (v21) at (0,0.5) {};
					\smlv (v22) at (0.5,0.5) {};
					\smlv (v23) at (1,0.5) {};

					\smlv (v31) at (0,1) {};
					
					\path[bend right]
					(v13) edge (v31)
					(v23) edge (v21)
					;
				\end{scope}
				
				\draw [to reversed-to,red](9.3,0.5) -- (9.7,0.5);
				\begin{scope}[xshift=10cm]			
					\smlv (v11) at (0,0) {};
					\smlv (v12) at (0.5,0) {};
					\smlv (v13) at (1,0) {};

					\smlv (v21) at (0,0.5) {};
					\smlv (v22) at (0.5,0.5) {};
					\smlv (v23) at (1,0.5) {};

					\smlv (v31) at (0,1) {};
					
					\path
					(v11) edge (v12)
					;
					\path[bend right]
					(v13) edge (v31)
					(v23) edge (v21)
					;
				\end{scope}
				
				\draw [-to,red](11.3,0.5) -- (11.7,0.5);
				\begin{scope}[xshift=12cm]
					\node at (0.5,-0.5) {$\sigma_2$};
					
					\smlv (v11) at (0,0) {};
					\smlv (v12) at (0.5,0) {};
					\smlv (v13) at (1,0) {};

					\smlv (v21) at (0,0.5) {};
					\smlv (v22) at (0.5,0.5) {};
					\smlv (v23) at (1,0.5) {};

					\smlv (v31) at (0,1) {};
					
					\path
					(v11) edge (v12)
					;
					\path[bend right]
					(v23) edge (v21)
					;
				\end{scope}
			\end{scope}
			
			\draw [-to,red](-1.2,-0.7) -- (-0.3,-1.5);
			\begin{scope}[yshift=-2cm]
				\begin{scope}
					
					\smlv (v11) at (0,0) {};
					\smlv (v12) at (0.5,0) {};
					\smlv (v13) at (1,0) {};

					\smlv (v21) at (0,0.5) {};
					\smlv (v22) at (0.5,0.5) {};
					\smlv (v23) at (1,0.5) {};

					\smlv (v31) at (0,1) {};
					
					\path
					(v11) edge (v22)
					(v12) edge (v23)
					;
				\end{scope}
				
				\draw [to reversed-to,red](1.3,0.5) -- (1.7,0.5);
				\begin{scope}[xshift=2cm]			
					\smlv (v11) at (0,0) {};
					\smlv (v12) at (0.5,0) {};
					\smlv (v13) at (1,0) {};

					\smlv (v21) at (0,0.5) {};
					\smlv (v22) at (0.5,0.5) {};
					\smlv (v23) at (1,0.5) {};

					\smlv (v31) at (0,1) {};
					
					\path
					(v11) edge (v22)
					(v12) edge (v23)
					;
					\path[bend right]
					(v13) edge (v31)
					;
				\end{scope}
				
				\draw [-to,red](3.3,0.5) -- (3.7,0.5);
				\begin{scope}[xshift=4cm]			
					\smlv (v11) at (0,0) {};
					\smlv (v12) at (0.5,0) {};
					\smlv (v13) at (1,0) {};

					\smlv (v21) at (0,0.5) {};
					\smlv (v22) at (0.5,0.5) {};
					\smlv (v23) at (1,0.5) {};

					\smlv (v31) at (0,1) {};
					
					\path
					(v12) edge (v23)
					;
					\path[bend right]
					(v13) edge (v31)
					;
				\end{scope}
				
				\draw [to reversed-to,red](5.3,0.5) -- (5.7,0.5);
				\begin{scope}[xshift=6cm]			
					\smlv (v11) at (0,0) {};
					\smlv (v12) at (0.5,0) {};
					\smlv (v13) at (1,0) {};

					\smlv (v21) at (0,0.5) {};
					\smlv (v22) at (0.5,0.5) {};
					\smlv (v23) at (1,0.5) {};

					\smlv (v31) at (0,1) {};
					
					\path
					(v12) edge (v23)
					(v21) edge (v22)
					;
					\path[bend right]
					(v13) edge (v31)
					;
				\end{scope}
				
				\draw [-to,red](7.3,0.5) -- (7.7,0.5);
				\begin{scope}[xshift=8cm]			
					\smlv (v11) at (0,0) {};
					\smlv (v12) at (0.5,0) {};
					\smlv (v13) at (1,0) {};

					\smlv (v21) at (0,0.5) {};
					\smlv (v22) at (0.5,0.5) {};
					\smlv (v23) at (1,0.5) {};

					\smlv (v31) at (0,1) {};
					
					\path
					(v12) edge (v23)
					(v21) edge (v22)
					;
				\end{scope}
				
				\draw [to reversed-to,red](9.3,0.5) -- (9.7,0.5);
				\begin{scope}[xshift=10cm]			
					\smlv (v11) at (0,0) {};
					\smlv (v12) at (0.5,0) {};
					\smlv (v13) at (1,0) {};

					\smlv (v21) at (0,0.5) {};
					\smlv (v22) at (0.5,0.5) {};
					\smlv (v23) at (1,0.5) {};

					\smlv (v31) at (0,1) {};
					
					\path
					(v12) edge (v23)
					(v21) edge (v22)
					;
					\path[bend left]
					(v13) edge (v11)
					;
				\end{scope}
				
				\draw [-to,red](11.3,0.5) -- (11.7,0.5);
				\begin{scope}[xshift=12cm]		
					\node at (0.5,-0.5) {$\sigma_3$};
					
					\smlv (v11) at (0,0) {};
					\smlv (v12) at (0.5,0) {};
					\smlv (v13) at (1,0) {};

					\smlv (v21) at (0,0.5) {};
					\smlv (v22) at (0.5,0.5) {};
					\smlv (v23) at (1,0.5) {};

					\smlv (v31) at (0,1) {};
					
					\path
					(v21) edge (v22)
					;
					\path[bend left]
					(v13) edge (v11)
					;
				\end{scope}
				
			\end{scope}
			
		\end{scope}
	\end{tikzpicture}
	\caption{All three possible $\M^*$-paths that start from a $1$-simplex contained in $\eta_3$, and end at a critical $1$-simplex.}\label{eta3}
\end{figure}
These paths end at $\sigma_1$, $\sigma_2$, and $\sigma_3$, respectively. However, there is no gradient path from a face of $\eta_3$ to $\sigma_4$, disproving the claim.

\section{Obtaining an optimal gradient vector field on $M_7$}
We now prove that $\M^{**}$ indeed can be augmented further, and as a consequence, we end up with an optimal gradient vector field on $M_7$. We present the following stronger version of a previously established theorem \cite[Theorem~1.4]{mms}.

\begin{theorem}
	There is a gradient vector field on $M_7$ such that there are 21 critical $2$-simplices, one critical $1$-simplex, and one critical $0$-simplex. Consequently, it is an optimal gradient vector field on $M_7$.
\end{theorem}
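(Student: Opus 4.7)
The plan is to start from the gradient vector field $\M^{**}$, which has $(22,2,1)$ critical simplices in dimensions $(2,1,0)$, and apply Forman's cancellation of a critical pair one more time to obtain $(21,1,1)$. Since the paper already states that $(21,1,1)$ are the theoretical lower bounds in those dimensions, a single successful cancellation will automatically establish optimality via the Morse inequalities. Concretely, I need to exhibit a critical pair $(\eta,\sigma)$ with $\dim\eta=2$, $\dim\sigma=1$, and $\sigma\in\{\sigma_1,\sigma_2\}$, such that exactly one $\M^{**}$-gradient path emanates from a face of $\eta$ and terminates at $\sigma$; Forman's theorem \cite[Theorem~2.7]{mms} then does the rest.

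The key step is locating such a pair, and the natural candidate is the $2$-simplex $\eta_3$ highlighted in the previous section. Under $\M^{*}$, exactly three gradient paths from faces of $\eta_3$ reach critical $1$-simplices, ending at $\sigma_1$, $\sigma_2$, and $\sigma_3$. Passing from $\M^*$ to $\M^{**}$ reverses the unique $\M^{*}$-paths that were used to cancel $(\eta_1,\sigma_4)$ and $(\eta_2,\sigma_3)$, so the third gradient path from $\eta_3$ no longer terminates at the (now non-critical) $\sigma_3$; instead it is redirected along the reversed path through $\eta_2$ and continues as a new $\M^{**}$-path until it hits $\sigma_1$ or $\sigma_2$. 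Tracing this redirection explicitly, one determines to which of $\sigma_1$ or $\sigma_2$ the extended path delivers $\eta_3$. If that endpoint is, say, $\sigma_2$, then the remaining $\M^{**}$-path from $\eta_3$ to $\sigma_1$ is unique, and $(\eta_3,\sigma_1)$ is cancellable; otherwise $(\eta_3,\sigma_2)$ is. If in the process a second path to the same $\sigma_i$ is created, I would abandon $\eta_3$ and run the same analysis on other critical $2$-simplices whose $\M^*$-paths involve $\sigma_3$ or $\sigma_4$, as these are the ones whose behaviour under $\M^{**}$ differs from that under $\M^*$.

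The main obstacle is the bookkeeping of gradient paths under $\M^{**}$. A single $\M^*$-path that passes through any simplex lying on either of the two reversed segments gets rerouted, and brand-new gradient paths not present in $\M^*$ can appear; one must therefore not merely transplant the enumeration done for $\M^*$ but re-enumerate from scratch all $\M^{**}$-paths starting at the three $1$-faces of the candidate $2$-simplex. The verification requires the same exhaustive diagram-chasing illustrated in Figure~\ref{eta3}, but now with the reversed matchings at the positions of $(\eta_1,\sigma_4)$ and $(\eta_2,\sigma_3)$ in place. Once uniqueness is confirmed for a single pair $(\eta,\sigma_i)$, a direct application of the cancellation theorem produces a gradient vector field with exactly $(21,1,1)$ critical simplices. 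Because these numbers saturate the theoretical lower bounds already recorded for $M_7$, the resulting vector field is optimal, disproving Conjecture~5.1 and establishing the stronger version of \cite[Theorem~1.4]{mms}.
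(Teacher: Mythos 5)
Your overall strategy---iterate Forman's single-pair cancellation once more, this time starting from $\M^{**}$---is legitimate in principle, but as written the proof has a genuine gap at exactly the step you yourself flag as the key one: you never establish that some critical pair $(\eta,\sigma)$ is joined by a \emph{unique} $\M^{**}$-gradient path. Your argument is explicitly conditional (``if that endpoint is, say, $\sigma_2$, then $(\eta_3,\sigma_1)$ is cancellable; otherwise $(\eta_3,\sigma_2)$ is'', and ``if a second path to the same $\sigma_i$ is created, I would abandon $\eta_3$''). Since $\M^{**}$ is obtained from $\M^*$ by reversing two gradient paths, the $\M^{**}$-paths out of $\eta_3$ are not a relabelling of those in Figure~\ref{eta3}: any $\M^*$-path meeting a simplex on either reversed segment is rerouted and genuinely new paths can appear, so the uniqueness hypothesis of the cancellation theorem is precisely what must be verified and is not verified. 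Without that enumeration the theorem cannot be invoked, and what you have is a programme rather than a proof.

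The paper sidesteps this re-enumeration entirely by not passing through $\M^{**}$ at all. It returns to $\M^*$, where the gradient paths out of $\eta_1$, $\eta_2$, $\eta_3$ are already fully listed (Figures~\ref{eta1}, \ref{eta2}, \ref{eta3}), observes that there are unique $\M^*$-paths $\eta_1 \to \sigma_4$, $\eta_2 \to \sigma_3$, $\eta_3 \to \sigma_1$, and---crucially---that this is the \emph{only} bijection $\{\eta_1,\eta_2,\eta_3\} \to \{\sigma_1,\sigma_3,\sigma_4\}$ realized by $\M^*$-paths. Hersh's criterion \cite[Theorem~2.8]{mms} then allows all three pairs to be cancelled simultaneously, producing $\M^\odot$ with $(21,1,1)$ critical simplices in a single step, after which optimality follows from the lower bounds exactly as you say. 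If you wish to salvage the sequential route you would have to carry out in full the $\M^{**}$ diagram chase you describe; the simultaneous-cancellation criterion is the idea that makes that work unnecessary.
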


\begin{proof}
	Let us consider the following three critical (with respect to $\M^{*}$) $2$-simplices.
		\[\begin{tikzpicture}
		\begin{scope}
			\node at (-0.7,0.5) {$\eta_1 = $};
			
			\smlv (v11) at (0,0) {};
			\smlv (v12) at (0.5,0) {};
			\smlv (v13) at (1,0) {};

			\smlv (v21) at (0,0.5) {};
			\smlv (v22) at (0.5,0.5) {};
			\smlv (v23) at (1,0.5) {};

			\smlv (v31) at (0,1) {};
			
			\path
			(v12) edge (v22)
			(v13) edge (v23)
			(v21) edge (v31)
			;
		\end{scope}
		
		\begin{scope}[xshift=3cm]
			\node at (-0.7,0.5) {$\eta_2 = $};
			
			\smlv (v11) at (0,0) {};
			\smlv (v12) at (0.5,0) {};
			\smlv (v13) at (1,0) {};

			\smlv (v21) at (0,0.5) {};
			\smlv (v22) at (0.5,0.5) {};
			\smlv (v23) at (1,0.5) {};

			\smlv (v31) at (0,1) {};
			
			\path
			(v11) edge (v22)
			(v12) edge (v21)
			(v23) edge (v31)
			;
		\end{scope}
		
		\begin{scope}[xshift=6cm]
			\node at (-0.7,0.5) {$\eta_3 = $};
			
			\smlv (v11) at (0,0) {};
			\smlv (v12) at (0.5,0) {};
			\smlv (v13) at (1,0) {};

			\smlv (v21) at (0,0.5) {};
			\smlv (v22) at (0.5,0.5) {};
			\smlv (v23) at (1,0.5) {};

			\smlv (v31) at (0,1) {};
			
			\path
			(v21) edge (v31)
			(v11) edge (v22)
			(v12) edge (v23)
			;
		\end{scope}
	\end{tikzpicture}\]
	
	We note that there are exactly two gradient paths, starting from a face of $\eta_1$ and ending at a critical $1$-simplex \cite[Fig.~11, Fig.~14, Fig.~23]{mms} as shown in Figure~\ref{eta1} below.	
	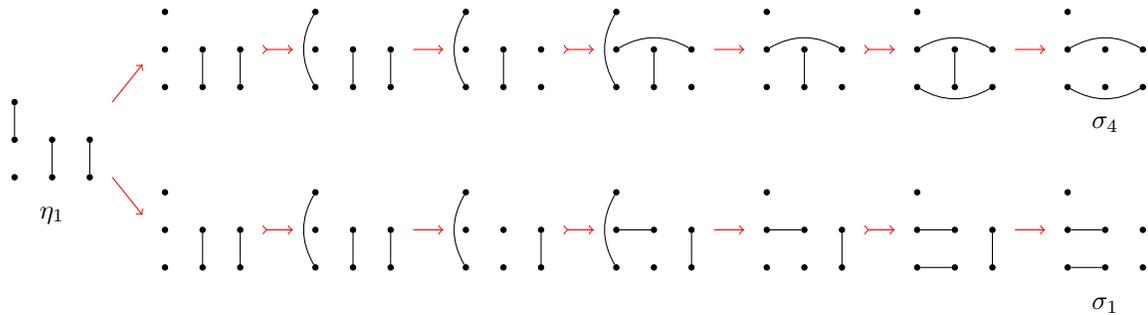
\begin{figure}[!ht]
		\centering
		\begin{tikzpicture}
			\begin{scope}[xshift=-2cm]
				\node at (0.5,-0.5) {$\eta_1$};
				
				\smlv (v11) at (0,0) {};
				\smlv (v12) at (0.5,0) {};
				\smlv (v13) at (1,0) {};

				\smlv (v21) at (0,0.5) {};
				\smlv (v22) at (0.5,0.5) {};
				\smlv (v23) at (1,0.5) {};

				\smlv (v31) at (0,1) {};
				
				\path
				(v12) edge (v22)
				(v13) edge (v23)
				(v21) edge (v31)
				;
			\end{scope}
			
			\draw [-to,red](-0.7,1) -- (-0.3,1.5);
			\begin{scope}[yshift=1.2cm]
				\begin{scope}
					
					\smlv (v11) at (0,0) {};
					\smlv (v12) at (0.5,0) {};
					\smlv (v13) at (1,0) {};

					\smlv (v21) at (0,0.5) {};
					\smlv (v22) at (0.5,0.5) {};
					\smlv (v23) at (1,0.5) {};

					\smlv (v31) at (0,1) {};
					
					\path
					(v12) edge (v22)
					(v13) edge (v23)
					;
				\end{scope}
				
				\draw [to reversed-to,red](1.3,0.5) -- (1.7,0.5);
				\begin{scope}[xshift=2cm]	
					\smlv (v11) at (0,0) {};
					\smlv (v12) at (0.5,0) {};
					\smlv (v13) at (1,0) {};

					\smlv (v21) at (0,0.5) {};
					\smlv (v22) at (0.5,0.5) {};
					\smlv (v23) at (1,0.5) {};

					\smlv (v31) at (0,1) {};
					
					\path
					(v12) edge (v22)
					(v13) edge (v23)
					;
					\path[bend left]
					(v11) edge (v31)
					;
				\end{scope}
				
				\draw [-to,red](3.3,0.5) -- (3.7,0.5);
				\begin{scope}[xshift=4cm]		
					\smlv (v11) at (0,0) {};
					\smlv (v12) at (0.5,0) {};
					\smlv (v13) at (1,0) {};

					\smlv (v21) at (0,0.5) {};
					\smlv (v22) at (0.5,0.5) {};
					\smlv (v23) at (1,0.5) {};

					\smlv (v31) at (0,1) {};
					
					\path
					(v12) edge (v22)
					;
					\path[bend left]
					(v11) edge (v31)
					;
				\end{scope}
				
				\draw [to reversed-to,red](5.3,0.5) -- (5.7,0.5);
				\begin{scope}[xshift=6cm]		
					\smlv (v11) at (0,0) {};
					\smlv (v12) at (0.5,0) {};
					\smlv (v13) at (1,0) {};

					\smlv (v21) at (0,0.5) {};
					\smlv (v22) at (0.5,0.5) {};
					\smlv (v23) at (1,0.5) {};

					\smlv (v31) at (0,1) {};
					
					\path
					(v12) edge (v22)
					;
					\path[bend left]
					(v11) edge (v31)
					(v21) edge (v23)
					;
				\end{scope}
				
				\draw [-to,red](7.3,0.5) -- (7.7,0.5);
				\begin{scope}[xshift=8cm]	
					\smlv (v11) at (0,0) {};
					\smlv (v12) at (0.5,0) {};
					\smlv (v13) at (1,0) {};

					\smlv (v21) at (0,0.5) {};
					\smlv (v22) at (0.5,0.5) {};
					\smlv (v23) at (1,0.5) {};

					\smlv (v31) at (0,1) {};
					
					\path
					(v12) edge (v22)
					;
					\path[bend left]
					(v21) edge (v23)
					;
				\end{scope}
				
				\draw [to reversed-to,red](9.3,0.5) -- (9.7,0.5);
				\begin{scope}[xshift=10cm]			
					\smlv (v11) at (0,0) {};
					\smlv (v12) at (0.5,0) {};
					\smlv (v13) at (1,0) {};

					\smlv (v21) at (0,0.5) {};
					\smlv (v22) at (0.5,0.5) {};
					\smlv (v23) at (1,0.5) {};

					\smlv (v31) at (0,1) {};
					
					\path
					(v12) edge (v22)
					;
					\path[bend left]
					(v13) edge (v11)
					(v21) edge (v23)
					;
				\end{scope}
				
				\draw [-to,red](11.3,0.5) -- (11.7,0.5);
				\begin{scope}[xshift=12cm]
					\node at (0.5,-0.5) {$\sigma_4$};
					
					\smlv (v11) at (0,0) {};
					\smlv (v12) at (0.5,0) {};
					\smlv (v13) at (1,0) {};

					\smlv (v21) at (0,0.5) {};
					\smlv (v22) at (0.5,0.5) {};
					\smlv (v23) at (1,0.5) {};

					\smlv (v31) at (0,1) {};
					
					\path[bend left]
					(v13) edge (v11)
					(v21) edge (v23)
					;
				\end{scope}
			\end{scope}
			
			\draw [-to,red](-0.7,0) -- (-0.3,-0.5);
			\begin{scope}[yshift=-1.2cm]
				\begin{scope}
					
					\smlv (v11) at (0,0) {};
					\smlv (v12) at (0.5,0) {};
					\smlv (v13) at (1,0) {};

					\smlv (v21) at (0,0.5) {};
					\smlv (v22) at (0.5,0.5) {};
					\smlv (v23) at (1,0.5) {};

					\smlv (v31) at (0,1) {};
					
					\path
					(v12) edge (v22)
					(v13) edge (v23)
					;
				\end{scope}
				
				\draw [to reversed-to,red](1.3,0.5) -- (1.7,0.5);
				\begin{scope}[xshift=2cm]		
					\smlv (v11) at (0,0) {};
					\smlv (v12) at (0.5,0) {};
					\smlv (v13) at (1,0) {};

					\smlv (v21) at (0,0.5) {};
					\smlv (v22) at (0.5,0.5) {};
					\smlv (v23) at (1,0.5) {};

					\smlv (v31) at (0,1) {};
					
					\path
					(v12) edge (v22)
					(v13) edge (v23)
					;
					\path[bend left]
					(v11) edge (v31)
					;
				\end{scope}
				
				\draw [-to,red](3.3,0.5) -- (3.7,0.5);
				\begin{scope}[xshift=4cm]		
					\smlv (v11) at (0,0) {};
					\smlv (v12) at (0.5,0) {};
					\smlv (v13) at (1,0) {};

					\smlv (v21) at (0,0.5) {};
					\smlv (v22) at (0.5,0.5) {};
					\smlv (v23) at (1,0.5) {};

					\smlv (v31) at (0,1) {};
					
					\path
					(v13) edge (v23)
					;
					\path[bend left]
					(v11) edge (v31)
					;
				\end{scope}
				
				\draw [to reversed-to,red](5.3,0.5) -- (5.7,0.5);
				\begin{scope}[xshift=6cm]		
					\smlv (v11) at (0,0) {};
					\smlv (v12) at (0.5,0) {};
					\smlv (v13) at (1,0) {};

					\smlv (v21) at (0,0.5) {};
					\smlv (v22) at (0.5,0.5) {};
					\smlv (v23) at (1,0.5) {};

					\smlv (v31) at (0,1) {};
					
					\path
					(v13) edge (v23)
					(v21) edge (v22)
					;
					\path[bend left]
					(v11) edge (v31)
					;
				\end{scope}
				
				\draw [-to,red](7.3,0.5) -- (7.7,0.5);
				\begin{scope}[xshift=8cm]	
					\smlv (v11) at (0,0) {};
					\smlv (v12) at (0.5,0) {};
					\smlv (v13) at (1,0) {};

					\smlv (v21) at (0,0.5) {};
					\smlv (v22) at (0.5,0.5) {};
					\smlv (v23) at (1,0.5) {};

					\smlv (v31) at (0,1) {};
					
					\path
					(v13) edge (v23)
					(v21) edge (v22)
					;
				\end{scope}
				
				\draw [to reversed-to,red](9.3,0.5) -- (9.7,0.5);
				\begin{scope}[xshift=10cm]		
					\smlv (v11) at (0,0) {};
					\smlv (v12) at (0.5,0) {};
					\smlv (v13) at (1,0) {};

					\smlv (v21) at (0,0.5) {};
					\smlv (v22) at (0.5,0.5) {};
					\smlv (v23) at (1,0.5) {};

					\smlv (v31) at (0,1) {};
					
					\path
					(v11) edge (v12)
					(v13) edge (v23)
					(v21) edge (v22)
					;
				\end{scope}
				
				\draw [-to,red](11.3,0.5) -- (11.7,0.5);
				\begin{scope}[xshift=12cm]		
					\node at (0.5,-0.5) {$\sigma_1$};
					
					\smlv (v11) at (0,0) {};
					\smlv (v12) at (0.5,0) {};
					\smlv (v13) at (1,0) {};

					\smlv (v21) at (0,0.5) {};
					\smlv (v22) at (0.5,0.5) {};
					\smlv (v23) at (1,0.5) {};

					\smlv (v31) at (0,1) {};
					
					\path
					(v11) edge (v12)
					(v21) edge (v22)
					;
				\end{scope}
			\end{scope}
		\end{tikzpicture}
		\caption{Only two possible $\M^*$-paths that start from a $1$-simplex contained in $\eta_1$, and end at a critical $1$-simplex.}\label{eta1}
	\end{figure}
	
	Also, there are exactly two gradient paths from a face of $\eta_2$ to a critical $1$-simplex \cite[Fig.~18, Fig.~19, Fig.~25]{mms} as shown in Figure~\ref{eta2} below.	
	\begin{figure}[!ht]
		\centering
		\begin{tikzpicture}
			\begin{scope}[scale=0.96, transform shape]
				\begin{scope}[xshift=-2cm]
					\node at (0.5,-0.5) {$\eta_2$};
					
					\smlv (v11) at (0,0) {};
					\smlv (v12) at (0.5,0) {};
					\smlv (v13) at (1,0) {};

					\smlv (v21) at (0,0.5) {};
					\smlv (v22) at (0.5,0.5) {};
					\smlv (v23) at (1,0.5) {};

					\smlv (v31) at (0,1) {};
					
					\path
					(v11) edge (v22)
					(v12) edge (v21)
					(v23) edge (v31)
					;
				\end{scope}
				
				\draw [-to,red](-0.7,1) -- (-0.3,1.5);
				\begin{scope}[yshift=1.2cm]
					\begin{scope}
						
						\smlv (v11) at (0,0) {};
						\smlv (v12) at (0.5,0) {};
						\smlv (v13) at (1,0) {};

						\smlv (v21) at (0,0.5) {};
						\smlv (v22) at (0.5,0.5) {};
						\smlv (v23) at (1,0.5) {};

						\smlv (v31) at (0,1) {};
						
						\path
						(v11) edge (v22)
						(v12) edge (v21)
						;
					\end{scope}
					
					\draw [to reversed-to,red](1.3,0.5) -- (1.7,0.5);
					\begin{scope}[xshift=2cm]			
						\smlv (v11) at (0,0) {};
						\smlv (v12) at (0.5,0) {};
						\smlv (v13) at (1,0) {};

						\smlv (v21) at (0,0.5) {};
						\smlv (v22) at (0.5,0.5) {};
						\smlv (v23) at (1,0.5) {};

						\smlv (v31) at (0,1) {};
						
						\path
						(v11) edge (v22)
						(v12) edge (v21)
						;
						\path[bend right]
						(v13) edge (v31)
						;
					\end{scope}
					
					\draw [-to,red](3.3,0.5) -- (3.7,0.5);
					\begin{scope}[xshift=4cm]			
						\smlv (v11) at (0,0) {};
						\smlv (v12) at (0.5,0) {};
						\smlv (v13) at (1,0) {};

						\smlv (v21) at (0,0.5) {};
						\smlv (v22) at (0.5,0.5) {};
						\smlv (v23) at (1,0.5) {};

						\smlv (v31) at (0,1) {};
						
						\path
						(v11) edge (v22)
						;
						\path[bend right]
						(v13) edge (v31)
						;
					\end{scope}
					
					\draw [to reversed-to,red](5.3,0.5) -- (5.7,0.5);
					\begin{scope}[xshift=6cm]			
						\smlv (v11) at (0,0) {};
						\smlv (v12) at (0.5,0) {};
						\smlv (v13) at (1,0) {};

						\smlv (v21) at (0,0.5) {};
						\smlv (v22) at (0.5,0.5) {};
						\smlv (v23) at (1,0.5) {};

						\smlv (v31) at (0,1) {};
						
						\path
						(v11) edge (v22)
						;
						\path[bend right]
						(v13) edge (v31)
						(v23) edge (v21)
						;
					\end{scope}
					
					\draw [-to,red](7.3,0.5) -- (7.7,0.5);
					\begin{scope}[xshift=8cm]			
						\smlv (v11) at (0,0) {};
						\smlv (v12) at (0.5,0) {};
						\smlv (v13) at (1,0) {};

						\smlv (v21) at (0,0.5) {};
						\smlv (v22) at (0.5,0.5) {};
						\smlv (v23) at (1,0.5) {};

						\smlv (v31) at (0,1) {};
						
						\path[bend right]
						(v13) edge (v31)
						(v23) edge (v21)
						;
					\end{scope}
					
					\draw [to reversed-to,red](9.3,0.5) -- (9.7,0.5);
					\begin{scope}[xshift=10cm]			
						\smlv (v11) at (0,0) {};
						\smlv (v12) at (0.5,0) {};
						\smlv (v13) at (1,0) {};

						\smlv (v21) at (0,0.5) {};
						\smlv (v22) at (0.5,0.5) {};
						\smlv (v23) at (1,0.5) {};

						\smlv (v31) at (0,1) {};
						
						\path
						(v11) edge (v12)
						;
						\path[bend right]
						(v13) edge (v31)
						(v23) edge (v21)
						;
					\end{scope}
					
					\draw [-to,red](11.3,0.5) -- (11.7,0.5);
					\begin{scope}[xshift=12cm]
						\node at (0.5,-0.5) {$\sigma_2$};
						
						\smlv (v11) at (0,0) {};
						\smlv (v12) at (0.5,0) {};
						\smlv (v13) at (1,0) {};

						\smlv (v21) at (0,0.5) {};
						\smlv (v22) at (0.5,0.5) {};
						\smlv (v23) at (1,0.5) {};

						\smlv (v31) at (0,1) {};
						
						\path
						(v11) edge (v12)
						;
						\path[bend left]
						(v21) edge (v23)
						;
					\end{scope}
				\end{scope}
				
				\draw [-to,red](-0.7,0) -- (-0.3,-0.5);
				\begin{scope}[yshift=-1.2cm]
					\begin{scope}
						
						\smlv (v11) at (0,0) {};
						\smlv (v12) at (0.5,0) {};
						\smlv (v13) at (1,0) {};

						\smlv (v21) at (0,0.5) {};
						\smlv (v22) at (0.5,0.5) {};
						\smlv (v23) at (1,0.5) {};

						\smlv (v31) at (0,1) {};
						
						\path
						(v12) edge (v21)
						(v23) edge (v31)
						;
					\end{scope}
					
					\draw [to reversed-to,red](1.3,0.5) -- (1.7,0.5);
					\begin{scope}[xshift=2cm]			
						\smlv (v11) at (0,0) {};
						\smlv (v12) at (0.5,0) {};
						\smlv (v13) at (1,0) {};

						\smlv (v21) at (0,0.5) {};
						\smlv (v22) at (0.5,0.5) {};
						\smlv (v23) at (1,0.5) {};

						\smlv (v31) at (0,1) {};
						
						\path
						(v12) edge (v21)
						(v23) edge (v31)
						;
						\path[bend right]
						(v11) edge (v13)
						;
					\end{scope}
					
					\draw [-to,red](3.3,0.5) -- (3.7,0.5);
					\begin{scope}[xshift=4cm]			
						\smlv (v11) at (0,0) {};
						\smlv (v12) at (0.5,0) {};
						\smlv (v13) at (1,0) {};

						\smlv (v21) at (0,0.5) {};
						\smlv (v22) at (0.5,0.5) {};
						\smlv (v23) at (1,0.5) {};

						\smlv (v31) at (0,1) {};
						
						\path
						(v23) edge (v31)
						;
						\path[bend right]
						(v11) edge (v13)
						;
					\end{scope}
					
					\draw [to reversed-to,red](5.3,0.5) -- (5.7,0.5);
					\begin{scope}[xshift=6cm]			
						\smlv (v11) at (0,0) {};
						\smlv (v12) at (0.5,0) {};
						\smlv (v13) at (1,0) {};

						\smlv (v21) at (0,0.5) {};
						\smlv (v22) at (0.5,0.5) {};
						\smlv (v23) at (1,0.5) {};

						\smlv (v31) at (0,1) {};
						
						\path
						(v21) edge (v22)
						(v23) edge (v31)
						;
						\path[bend right]
						(v11) edge (v13)
						;
					\end{scope}
					
					\draw [-to,red](7.3,0.5) -- (7.7,0.5);
					\begin{scope}[xshift=8cm]	
						\node at (0.5,-0.5) {$\sigma_3$};
						
						\smlv (v11) at (0,0) {};
						\smlv (v12) at (0.5,0) {};
						\smlv (v13) at (1,0) {};

						\smlv (v21) at (0,0.5) {};
						\smlv (v22) at (0.5,0.5) {};
						\smlv (v23) at (1,0.5) {};

						\smlv (v31) at (0,1) {};
						
						\path
						(v21) edge (v22)
						;
						\path[bend right]
						(v11) edge (v13)
						;
					\end{scope}
				\end{scope}
			\end{scope}
		\end{tikzpicture}
		\caption{Only two possible $\M^*$-paths that start from a $1$-simplex contained in $\eta_2$, and end at a critical $1$-simplex.}\label{eta2}
	\end{figure}
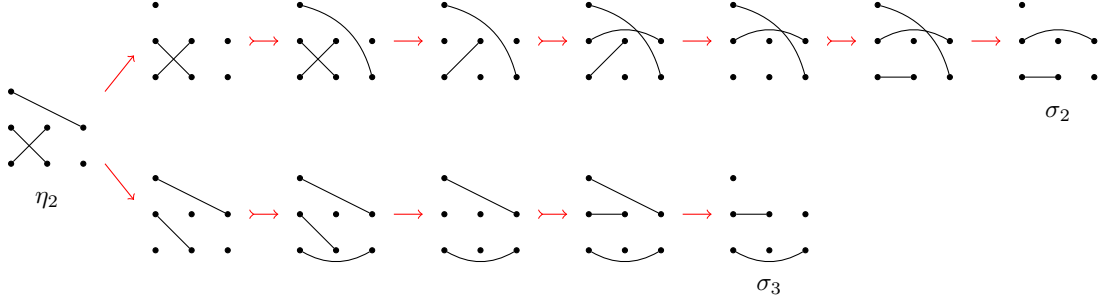

	From Figure~\ref{eta3}, Figure~\ref{eta1}, and Figure~\ref{eta2}, we observe that there is  
	\begin{enumerate}[(i)]
		\item a unique $\M^*$-path that starts from a face of the critical $2$-simplex $\eta_1$, and ends at the critical $1$-simplex $\sigma_4$,
		\item a unique $\M^*$-path that starts from a face of the critical $2$-simplex $\eta_2$, and ends at the critical $1$-simplex $\sigma_3$, and 
		\item a unique $\M^*$-path that starts from a face of the critical $2$-simplex $\eta_3$, and ends at the critical $1$-simplex $\sigma_1$.
	\end{enumerate}

	Moreover, we also observe that, other than $\eta_1 \mapsto \sigma_4$, $\eta_2 \mapsto \sigma_3$, $\eta_3 \mapsto \sigma_1$, there is no other bijection $\pi : \{\eta_1, \eta_2, \eta_3\} \to \{\sigma_1, \sigma_3, \sigma_4\}$, such that there is a $\M^*$-path from a face of $\eta_i$ to $\pi(\eta_i)$ for $i \in \{1,2,3\}$. Thus, a criterion by \cite{hersh} \cite[Theorem~2.8]{mms} for reversing multiple gradient paths to simultaneously cancel several pairs of critical simplices is satisfied. This allows us to apply the aforementioned technique of cancellation of a critical pair by reversing gradient paths thrice for the cancellable pairs $\eta_1$ \& $\sigma_4$, $\eta_2$ \& $\sigma_3$, and $\eta_3$ \& $\sigma_1$, and end up with a gradient vector field, say $\M^\odot$, from $\M^*$.
	
	With respect to $\M^\odot$, $\eta_1$, $\eta_2$, $\eta_3$, $\sigma_4$, $\sigma_3$, and  $\sigma_1$ are no longer critical, while the criticality of all other simplices remains unchanged. Thus, there are $24-3=21$ critical $2$-simplices, $4-3=1$ critical $1$-simplex (viz., $\sigma_4$), and one critical $0$-simplex, with respect to $\M^\odot$. Since, these numbers attain the theoretical lower-bounds, we conclude that $\M^\odot$ is an optimal gradient vector field on $M_7$.
	\end{proof}


\begin{thebibliography}{30}
	\providecommand{\natexlab}[1]{#1}
	\providecommand{\url}[1]{\texttt{#1}}
	\expandafter\ifx\csname urlstyle\endcsname\relax
	\providecommand{\doi}[1]{doi: #1}\else
	\providecommand{\doi}{doi: \begingroup \urlstyle{rm}\Url}\fi

	
	\bibitem[Forman(1998)]{forman}
	R.~Forman.
	\newblock {M}orse theory for cell complexes.
	\newblock \emph{Advances in Mathematics}, 134\penalty0 (1):\penalty0 90--145,
	1998.
	\newblock \doi{10.1006/aima.1997.1650}.
	\newblock URL
	\url{https://www.sciencedirect.com/science/article/pii/S0001870897916509}.
	
	\bibitem[Forman(2002)]{forman02}
	R.~Forman.
	\newblock A user's guide to discrete {M}orse theory.
	\newblock \emph{S{\'e}minaire Lotharingien de Combinatoire [electronic only]},
	48:\penalty0 B48c--35, 2002.
	
	\bibitem[Hersh(2005)]{hersh}
	P.~Hersh.
	\newblock On optimizing discrete {M}orse functions.
	\newblock \emph{Advances in Applied Mathematics}, 35\penalty0 (3):\penalty0
	294--322, 2005.
	\newblock \doi{10.1016/j.aam.2005.04.001}.
	\newblock URL
	\url{https://www.sciencedirect.com/science/article/pii/S0196885805000370}.
	
	
	\bibitem[Mondal et~al.(2024)Mondal, Mukherjee, and Saha]{mms}
	A. Mondal, S. Mukherjee, and K. Saha.
	\newblock Topology of matching complexes of complete graphs via discrete {M}orse theory.
	\newblock \emph{Discrete Mathematics \& Theoretical Computer Science}, 26:3~\#13, 2024.
	\newblock URL \url{https://doi.org/10.46298/dmtcs.12887}.
	
\end{thebibliography}
\end{document}